\documentclass[a4paper,10pt]{amsart}
\usepackage{multicol}
\usepackage{amsaddr}
\usepackage{enumerate}
\usepackage{amssymb} \usepackage{amsfonts} \usepackage{amsmath}
\usepackage{amsthm}
\usepackage{mathrsfs}
\usepackage{subfig}
\usepackage{color}

\usepackage{caption}
\usepackage[utf8x]{inputenc}
\usepackage[T1]{fontenc}
\usepackage[margin=1.5in]{geometry}

\newtheorem{theorem}{Theorem}[section]
\newtheorem{lemma}[theorem]{Lemma}

\theoremstyle{definition}
\newtheorem{ex}[theorem]{Example}

\newtheorem{defn}[theorem]{Definition}

\theoremstyle{remark}

\newcommand{\p} {\ensuremath {\mathbb{P}}}
\newcommand{\E} {\ensuremath {\mathbb{E}}}

\newcommand{\R} {\ensuremath {\mathbb{R}}}

\newcommand{\I} {\ensuremath {\mathbb{I}}}

\newcommand{\A} {\ensuremath {\mathcal{A}}}

\newcommand{\D} {\ensuremath {\mathcal{D}}}

\newcommand{\No} {\ensuremath {\mathcal{N}}}

\newcommand{\tnu} {\ensuremath {\tilde{\nu}}}

\title{$L_1$-distance for additive processes with time-homogeneous Lévy measures}
\author{Pierre ~Étoré and Ester ~Mariucci}
\address{Laboratoire Jean Kuntzmann, Grenoble.}
\email{Ester.Mariucci@imag.fr}
\date{\today}
\begin{document}
\begin{abstract}
We give an explicit bound for the $L_1$-distance between two additive processes of local characteristics $(f_j(\cdot),\sigma^2(\cdot),\nu_j)$, $j = 1,2$. The cases $\sigma =0$ and $\sigma(\cdot) > 0$ are both treated. We allow $\nu_1$ and $\nu_2$ to be time-homogeneous Lévy measures, possibly with infinite variation. Some examples of possible applications are discussed.
\end{abstract}
\maketitle
\section{Introduction and main result}
In this note we give an upper bound for the $L_1$-distance between the laws induced on the Skorokhod space by two additive processes observed until time $T>0$. By the $L_1$-distance between two $\sigma$-finite measures $\mu_1$ and $\mu_2$ on $(E,\mathscr E)$ such that $\mu_1$ is absolutely continuous with respect to $\mu_2$ we mean
$$L_1(\mu_1,\mu_2)=2\sup_{A\in\mathscr E}\big|\mu_1(A)-\mu_2(A)\big|=\int_E \bigg|\frac{d\mu_1}{d\mu_2}-1\bigg|d\mu_2.$$
Note that, with our definitions, the $L_1$-distance is twice the so called total variation distance.

Giving bounds for the $L_1$-distance is a classical problem, which, in the last decades, has been reinterpreted in more modern terms via Stein's method (see, e.g., \cite{P,R,pp}). This kind of problems arises in several fields such us Bayesian statistics, convergence rates of Markov chains or Monte Carlo algorithms (see \cite{GS}, Section 4 and the references therein). However, to the best of our knowledge, results bounding the $L_1$-distance between laws on the Skorokhod space are much less abundant. In this setting other kinds of distances have been privileged such as the Wasserstein-Kantorovich-Rubinstein metric (see \cite{g13}). More relevant to our purposes is a result due to Memin and Shiryayev \cite{MS} computing the Hellinger distance between the laws of any two processes with independent increments. In particular this gives a bound for the $L_1$-distance between additive processes. In order to state their result let us fix some notation. 

Let $\{x_t\}$ be the canonical process on the Skorokhod space $(D,\D)$ and denote by $P^{(f,\sigma^2,\nu)}$ the law induced on $(D,\D)$ by an additive process having local characteristics $(f(\cdot),\sigma^2(\cdot),\nu)$. We will denote such a process by $\big(\{x_t\},P^{(f,\sigma^2,\nu)}\big)$ and we will write $P_T^{(f,\sigma^2,\nu)}$ for the restriction of $P^{(f,\sigma^2,\nu)}$ to the $\sigma$-algebra generated by $\{x_s:0\leq s\leq T\}$ (see Section \ref{ap} for the precise definitions). Our purpose is to bound $L_1\big(P_T^{(f_1,\sigma_1^2,\nu_1)},P_T^{(f_2,\sigma_2^2,\nu_2}\big)$. From now on we will assume that $\sigma_1^2(\cdot)=\sigma_2^2(\cdot)=\sigma^2(\cdot)$, otherwise this distance is $2$ (see, e.g., \cite{newman,JS}). We also need to define the following quantities:
 \begin{align*}
&\gamma^{\nu_j}=\int_{\vert y \vert \leq 1}y\nu_j(dy), \quad j=1,2;\quad 
\xi^2=\int_0^T\frac{(f_2(r)-f_1(r)-(\gamma^{\nu_2}-\gamma^{\nu_1}))^2}{\sigma^2(r)} dr.
\end{align*}
\begin{theorem}[Memin and Shiryayev]\label{teo}
 Let $\big(\{x_t\}, P^{(f_1,\sigma^2,\nu_1)}\big)$ and $\big(\{x_t\}, P^{(f_2,\sigma^2,\nu_2)}\big)$ be two additive processes with $\nu_1$ and $\nu_2$ Lévy measures such that $\nu_1$ is absolutely continuous with respect to $\nu_2$ and satisfying: 
 \begin{equation}\label{hp}
  H^2(\nu_1,\nu_2):=\int_{\R}\bigg(\sqrt{\frac{d\nu_1}{d\nu_2}(y)}-1\bigg)^2\nu_2(dy)<\infty.
 \end{equation}
The following upper bounds hold, for any $0<T<\infty$:
If $\sigma^2>0$ then
  $$L_1\Big(P_T^{(f_1,\sigma^2,\nu_1)},P_T^{(f_2,\sigma^2,\nu_2)}\Big)\leq \sqrt{8\bigg(1-\exp\Big(-\frac{\xi^2}{8}-\frac{T}{2}H^2(\nu_1,\nu_2)\Big)\bigg)}.$$
  If $\sigma^2=0$ and  $f_1-f_2\equiv\gamma^{\nu_1}-\gamma^{\nu_2}$, then
   $$L_1\Big(P_T^{(f_1,0,\nu_1)},P_T^{(f_2,0,\nu_2)}\Big)\leq \sqrt{8\bigg(1-\exp\Big(-\frac{T}{2}H^2(\nu_1,\nu_2)\Big)\bigg)}.
$$
\end{theorem}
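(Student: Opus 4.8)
The plan is to dominate the $L_1$-distance by the Hellinger affinity of the two laws on $(D,\D)$ and then to compute this affinity by specialising to our additive processes the explicit formula of Memin and Shiryayev \cite{MS} for the Hellinger distance between processes with independent increments.

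Write $P=P_T^{(f_1,\sigma^2,\nu_1)}$ and $Q=P_T^{(f_2,\sigma^2,\nu_2)}$, and (once $P\ll Q$ has been checked) let $\rho(P,Q)=\int\sqrt{dP/dQ}\,dQ$ be the Hellinger affinity, so that the Hellinger distance $H(P,Q)$ satisfies $H(P,Q)^2=\int(\sqrt{dP/dQ}-1)^2dQ=2(1-\rho(P,Q))$. The first step is the elementary bound $L_1(P,Q)\le\sqrt{8(1-\rho(P,Q))}$: by Cauchy--Schwarz,
\[
L_1(P,Q)=\int\Big|\sqrt{\tfrac{dP}{dQ}}-1\Big|\Big(\sqrt{\tfrac{dP}{dQ}}+1\Big)dQ\le H(P,Q)\Big(\int\Big(\sqrt{\tfrac{dP}{dQ}}+1\Big)^2dQ\Big)^{1/2}=H(P,Q)\sqrt{2+2\rho(P,Q)},
\]
and $\rho(P,Q)\le1$ gives $L_1(P,Q)\le 2H(P,Q)=\sqrt{8(1-\rho(P,Q))}$. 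Hence everything reduces to establishing
\[
\rho\big(P_T^{(f_1,\sigma^2,\nu_1)},P_T^{(f_2,\sigma^2,\nu_2)}\big)=\exp\Big(-\frac{\xi^2}{8}-\frac{T}{2}H^2(\nu_1,\nu_2)\Big),
\]
with the summand $\xi^2/8$ dropped in the degenerate case $\sigma^2\equiv0$, $f_1-f_2\equiv\gamma^{\nu_1}-\gamma^{\nu_2}$.

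For the affinity I would invoke \cite{MS}: for two laws of processes with independent increments with $P\ll Q$, the Hellinger integral of order $1/2$ is deterministic and equals $\exp(-h_T)$, where $h_T$ is the terminal value of the associated Hellinger process, and for local characteristics $(f_j(\cdot),\sigma^2(\cdot),\nu_j)$ this process decomposes additively --- by the independence of the continuous and purely discontinuous parts --- into a Gaussian-drift term and a jump term. The jump term equals $\tfrac12\int_0^T\!\int_{\R}\big(\sqrt{d\nu_1/d\nu_2}(y)-1\big)^2\nu_2(dy)\,dr=\tfrac{T}{2}H^2(\nu_1,\nu_2)$, the last equality by the time-homogeneity of $\nu_1$ and $\nu_2$; assumption \eqref{hp} is exactly what makes it finite, and, combined with $\nu_1\ll\nu_2$ and (when $\sigma^2>0$) the Gaussian component, it yields $P\ll Q$ via the absolute-continuity criterion for processes with independent increments, cf. \cite{JS}. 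The Gaussian-drift term is the Hellinger exponent between two Gaussian laws with common variance $\int_0^T\sigma^2(r)\,dr$ and mean difference $\int_0^T(f_1(r)-f_2(r))\,dr-T(\gamma^{\nu_1}-\gamma^{\nu_2})$, where the shift $\gamma^{\nu_1}-\gamma^{\nu_2}$ --- a well-defined quantity under \eqref{hp}, even when the $\nu_j$ have infinite variation --- comes from the compensation of small jumps in the Lévy--Khintchine representation. Using $\rho(\mathcal N(m_1,v),\mathcal N(m_2,v))=\exp(-(m_1-m_2)^2/(8v))$ on the increments over a partition of $[0,T]$ and letting the mesh go to $0$ (a routine Cauchy--Schwarz/Riemann-sum estimate) shows this term equals $\tfrac18\int_0^T\sigma^{-2}(r)\big(f_1(r)-f_2(r)-(\gamma^{\nu_1}-\gamma^{\nu_2})\big)^2dr=\xi^2/8$. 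Summing the two terms gives $h_T=\xi^2/8+\tfrac{T}{2}H^2(\nu_1,\nu_2)$, hence the displayed value of $\rho$ and the first bound.

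When $\sigma^2\equiv0$ there is no continuous component to absorb a drift mismatch, so $P\ll Q$ now forces $f_1-f_2\equiv\gamma^{\nu_1}-\gamma^{\nu_2}$; under this hypothesis the Gaussian-drift term simply disappears, only $\tfrac{T}{2}H^2(\nu_1,\nu_2)$ survives, and the second bound follows exactly as above. The main obstacle I anticipate is the correct specialisation of the Memin--Shiryayev formula: on one hand the bookkeeping of the small-jump truncation in the Lévy--Khintchine triplet, which is what produces the correction $\gamma^{\nu_1}-\gamma^{\nu_2}$ inside $\xi^2$ and, in the case $\sigma^2\equiv0$, pins down the drift condition; on the other hand the limiting argument, through the consistency theory of Hellinger processes for processes with independent increments, identifying $\rho(P_T,Q_T)$ with the limit over refining partitions of the product of the increment affinities. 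These are precisely the points where \eqref{hp} (and, in the degenerate case, the drift condition) is genuinely used.
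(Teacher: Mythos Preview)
The paper does not actually prove this theorem: it is stated as a known result, attributed to Memin and Shiryayev \cite{MS}, and the authors explicitly remark that its original proof ``makes heavy use of general theory of semimartingales.'' Your proposal --- bound $L_1$ by $2H$ via Cauchy--Schwarz, then identify the Hellinger affinity $\rho(P_T,Q_T)=\exp(-h_T)$ by specialising the Memin--Shiryayev Hellinger-process formula to these additive processes --- is precisely the intended derivation and is correct in outline; there is nothing in the paper to compare it against beyond the citation itself.
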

Observe that \eqref{hp} implies $\gamma^{\nu_j}<\infty$, $j=1,2$. When $\sigma^2=0$ it follows from Theorem \ref{teo1} that, for example, $L_1\Big(P_T^{(\gamma^{\nu_1},0,\nu_1)},P_T^{(\gamma^{\nu_2},0,\nu_2)}\Big)\leq 2\sqrt{TL_1(\nu_1,\nu_2)}$.

The proof of Theorem \ref{teo}, however, makes heavy use of general theory of semimartingales. This note originated from the research for a proof based only on classical results for Lévy processes, Esscher-type transformations and the Cameron-Martin formula.
It turned out that this method, when applicable, gives sharper bound on the $L_1$-distance. More precisely, our main result is as follows.
\begin{theorem}\label{teo1}
 Let $\big(\{x_t\}, P^{(f_1,\sigma^2,\nu_1)}\big)$ and $\big(\{x_t\}, P^{(f_2,\sigma^2,\nu_2)}\big)$ be two additive processes with $\nu_1$ and $\nu_2$ Lévy measures such that $\nu_1$ is absolutely continuous with respect to $\nu_2$ and satisfying: 
 \begin{equation*}
  L_1(\nu_1,\nu_2)<\infty.
 \end{equation*}
 Then, the following upper bounds hold, for any $0<T<\infty$.
  
  If $\sigma^2>0$ then
  $$L_1\Big(P_T^{(f_1,\sigma^2,\nu_1)},P_T^{(f_2,\sigma^2,\nu_2)}\Big)\leq 2\sinh\Big(TL_1(\nu_1,\nu_2)\Big)+2\bigg[1-2\phi\Big(-\frac{\xi}{2}\Big)\bigg].$$
  If $\sigma^2=0$ and  $f_1-f_2\equiv\gamma^{\nu_1}-\gamma^{\nu_2}$, then
  
 $$L_1\Big(P_T^{(f_1,0,\nu_1)},P_T^{(f_2,0,\nu_2)}\Big)\leq 2\sinh\Big(TL_1(\nu_1,\nu_2)\Big).
$$

\end{theorem}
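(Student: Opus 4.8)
The plan is to exploit the Lévy–Khintchine / Lévy–Itô structure of additive processes with time-homogeneous Lévy measure, together with an Esscher-type change of measure, rather than the semimartingale machinery of Memin–Shiryayev. First I would reduce to the case in which the drift coefficients are centred: write $P_T^{(f_j,\sigma^2,\nu_j)}$ and observe that, since the Lévy measures $\nu_1,\nu_2$ are fixed (time-homogeneous) while the drift $f_j(\cdot)$ may be time-dependent, the discrepancy splits into a ``continuous'' part coming from $f_2-f_1-(\gamma^{\nu_2}-\gamma^{\nu_1})$ against the Gaussian component of local variance $\sigma^2(\cdot)$, and a ``jump'' part coming solely from the mutual absolute continuity of $\nu_1$ with respect to $\nu_2$. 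The hypothesis $L_1(\nu_1,\nu_2)<\infty$, i.e. $\int_{\R}\big|\tfrac{d\nu_1}{d\nu_2}-1\big|\,d\nu_2<\infty$, guarantees in particular $\gamma^{\nu_j}<\infty$ and that the two jump parts can be coupled through a single driving Poisson random measure.

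The key step is to compute the likelihood ratio $\frac{dP_T^{(f_1,\sigma^2,\nu_1)}}{dP_T^{(f_2,\sigma^2,\nu_2)}}$ explicitly. For the jump component this is the classical formula for the Radon–Nikodym derivative of two Lévy (here piecewise-Lévy, but with time-homogeneous $\nu$) laws: it has the form $\exp\big(\sum_{s\le T}\log\tfrac{d\nu_1}{d\nu_2}(\Delta x_s)-T\int_{\R}(\tfrac{d\nu_1}{d\nu_2}-1)\,d\nu_2\big)$, valid precisely because $\int\big|\tfrac{d\nu_1}{d\nu_2}-1\big|\,d\nu_2<\infty$ (this is where $L_1<\infty$, rather than the weaker Hellinger condition \eqref{hp}, is used to make the compensator integrable and the exponential a genuine density). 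For the Gaussian component, the Cameron–Martin / Girsanov formula contributes a factor whose $L^1$-deviation from $1$ is controlled by $\xi$ through a standard Gaussian computation, yielding the term $2[1-2\phi(-\xi/2)]$; note $1-2\phi(-\xi/2)=\E|e^{N(-\xi^2/8,\xi^2/4)}-\text{(normalising)}|$-type quantity, i.e. the $L_1$-distance between $N(0,1)$ and $N(\xi,1)$, which is exactly $2[1-2\phi(-\xi/2)]=L_1\big(N(0,\sigma^2\cdot),N(\xi\cdot,\sigma^2\cdot)\big)$ after the time-change $t\mapsto\int_0^t\sigma^2$. I would then bound $L_1\big(P_T^{(f_1,\sigma^2,\nu_1)},P_T^{(f_2,\sigma^2,\nu_2)}\big)=\Ep\big|\tfrac{dP_1}{dP_2}-1\big|$ by splitting the product density minus $1$ as $(A-1)B+(B-1)$ with $A$ the jump factor and $B$ the Gaussian factor, using independence of the two components under $P_2$, $\Ep[B]=1$, and the triangle inequality, so that the total bound is (jump contribution)$+$(Gaussian contribution).

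It then remains to show the jump contribution is at most $2\sinh\big(T L_1(\nu_1,\nu_2)\big)$. Writing $\lambda:=L_1(\nu_1,\nu_2)=\int|\tfrac{d\nu_1}{d\nu_2}-1|\,d\nu_2$ and denoting by $A$ the jump likelihood ratio above, one computes $\Ep|A-1|$ by conditioning on the number of jumps (a Poisson$(T\,\nu_2(\R))$ variable if finite, otherwise via an approximation by truncation of small jumps, with $L_1(\nu_1,\nu_2)<\infty$ ensuring the limit is well defined): each jump independently contributes a factor $\tfrac{d\nu_1}{d\nu_2}$ distributed according to $\nu_2/\nu_2(\R)$, and summing the resulting series $\sum_k \tfrac{(T\nu_2(\R))^k}{k!}e^{-T\nu_2(\R)}\,\E|\!\prod\!-\text{(compensator)}|$ produces, after bounding $|\prod(1+\varepsilon_i)-1|\le \prod(1+|\varepsilon_i|)-1$ and resumming, a bound of the shape $e^{T\lambda}-1+\text{(lower-order)}$ which one tightens to $2\sinh(T\lambda)=e^{T\lambda}-e^{-T\lambda}$. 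I expect this last resummation — getting the clean $\sinh$ rather than a cruder $e^{T\lambda}-1$, and handling the infinite-variation case by a careful truncation/limiting argument that keeps the compensator terms matched — to be the main obstacle; the Gaussian part is routine once the density is written down.
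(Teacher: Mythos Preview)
Your overall architecture matches the paper's: write the Radon--Nikodym density as $\exp(C_T+D_T)$ with $C_T$ the Cameron--Martin/Girsanov exponent and $D_T$ the jump exponent, use independence of the continuous and jump parts to reduce $\E_{P_2}\big|e^{C_T+D_T}-1\big|$ to the sum of the two contributions (your telescoping $(A-1)B+(B-1)$ together with $\E[B]=1$ gives the same outcome as the paper's symmetric inequality at this stage), and handle the Gaussian piece by an explicit normal computation yielding $2[1-2\phi(-\xi/2)]$.

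The gap is in the jump term. Your Poisson-conditioning route does not lead cleanly to $2\sinh(T\lambda)$ (with $\lambda=L_1(\nu_1,\nu_2)$): the deterministic compensator factor $\exp\!\big(-T\int(\tfrac{d\nu_1}{d\nu_2}-1)\,d\nu_2\big)$ spoils the bare product bound $|\prod(1+\varepsilon_i)-1|\le\prod(1+|\varepsilon_i|)-1$, and your remark that one ``tightens'' $e^{T\lambda}-1$ to $2\sinh(T\lambda)$ has the inequality the wrong way round (for $x>0$ one has $2\sinh x=e^x-e^{-x}>e^x-1$). The paper bypasses conditioning entirely. It splits $D_T=A^++A^-$ by separating jumps according to whether $\tfrac{d\nu_1}{d\nu_2}(\Delta x_r)\ge1$ or $<1$, but with the two constant compensators \emph{swapped} so that $A^+\ge0$ and $A^-\le0$ hold pathwise. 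The elementary inequality $|1-e^{x+y}|\le\tfrac{1+e^x}{2}|1-e^y|+\tfrac{1+e^y}{2}|1-e^x|$, evaluated at $x=A^+\ge0$, $y=A^-\le0$, then collapses to $e^{A^+}-e^{A^-}$; and each expectation $\E_{P^{(0,0,\nu_2)}}[e^{A^\pm}]$ is computable directly from the Esscher formula (up to a deterministic constant, $e^{A^\pm}$ is itself the density of a L\'evy change of measure with $d\nu/d\nu_2=h^\pm$), giving exactly $e^{\pm T\lambda}$ and hence $2\sinh(T\lambda)$. This sign-controlled decomposition is the missing idea; once it is in place, no separate truncation argument is needed in the infinite-activity case, since Sato's theorem already delivers the a.s.\ limits defining $D_T$ and $A^\pm$.
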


Remark that, in the case $\nu_1 = \nu_2 = 0$, i.e. where there are no jumps, the upper bound in Theorem \ref{teo1} is achieved. Indeed, an explicit formula for the $L_1$-distance between Gaussian processes is well known. Denoting by $\phi$ the cumulative distribution function of a normal random variable $\No(0,1)$, we have, for any $0<T<\infty$:
\begin{equation*}
L_1\big(P_T^{(f_1,\sigma^2,0)},P_T^{(f_2,\sigma^2,0)}\big)= 2\bigg(1-2\phi\bigg(-\frac{1}{2}\sqrt{\int_0^T\frac{(f_1(t)-f_2(t))^2}{\sigma_1^2(t)}dt}\bigg)\bigg)  
\end{equation*}
whenever the right-hand side term makes sense (see, e.g., \cite{BL}). 
 
The reason for our interest in the $L_1$-distance lies in the fact that it is a fundamental tool in the Le Cam theory of comparison of statistical models (\cite{lecam, LC2000}). More precisely, the presented result will be needed in a forthcoming paper by the second author, establishing an equivalence result, in the Le Cam sense, for additive processes.
Similar estimations appear in many other results concerning the Le Cam $\Delta$-distance. See for example \cite{BL,N96,cmultinomial}, where the $L_1$-distance between Gaussian processes is computed or \cite{NM,R2006, C14} concerning diffusion processes without jumps. In recent years, however, there is a growing interest in models with jumps due to their numerous applications in econometrics, insurance theory or financial modelling. Because of that, it is useful to dispose of simple formulas for estimating distances between such processes.

Theorem \ref{teo1} is proved in Section \ref{dim}. In Section \ref{ap} we collect some preliminary results about additive processes that will play a role in the proof. Before that, we give some examples of situations where our result can be applied. The choice of these examples are inspired by the models exhibited in \cite{tankov}.
\begin{ex}($L_1$-distance between compound Poisson processes)

 Let $\{X^1_t\}$ and $\{X^2_t\}$ be two compound Poisson processes on $[0,T]$ with intensities $\lambda_j>0$, $j=1,2$ and jump size distributions $G_j$; i.e. $\{X^j_t\}$ is a Lévy process of characteristic triplet $\big(\lambda_j\int_{|y|\leq 1}yG_j(dy),0,\lambda_jG_j\big)$. Furthermore, let $A$ be a subset of $\R$ and suppose that $G_j$ is equivalent to the Lebesgue measure restricted to $A$. Denote by $g_j$ the density $\frac{dG_j}{d\textnormal{Leb}_{|A}}$; then, an application of Theorem \ref{teo1} yields:
 $$L_1\big(X^1,X^2\big)\leq 2\sinh\Big(T\int_A |\lambda_1g_1(y)-\lambda_2g_2(y)|dy\Big).$$
\end{ex}
\begin{ex}($L_1$-distance between additive processes of jump-diffusion type)

 An additive process of jump-diffusion type on $[0,T]$ has the following form:
 $$X_t=\int_0^t f(r)dr+\int_0^t\sigma(r)dW_r+\sum_{i=1}^{N_t} Y_i,\quad t\in[0,T],$$
 where $\{W_t\}$ is a standard Brownian motion, $\{N_t\}$ is the Poisson process counting the jumps of $\{X_t\}$, and $Y_i$ are jumps sizes (i.i.d. random variables). Consider now the additive processes of jump-diffusion type $\{X_t^j\}$ having local characteristics $(f_j(\cdot)+\lambda_j\int_{|y|\leq 1}yG_j(dy),\sigma^2(\cdot),\lambda_jG_j)$, $j=1,2$ and suppose again that $G_j$ is equivalent to the Lebesgue measure restricted to some $A\subseteq \R$. Letting $g_j$ denote the density of $G_j$ as above, we have:
 $$L_1\big(X^1,X^2\big)\leq 2\sinh\Big(T\int_A |\lambda_1g_1(y)-\lambda_2g_2(y)|dy\Big)+2\Big(1-2\phi\Big(-\sqrt{\int_0^T\frac{(f_1(t)-f_2(t))^2}{4\sigma^2(t)}dt}\bigg)\bigg).$$
\end{ex}
\begin{ex}($L_1$-distance between tempered stable processes)

 Let $\{X_t^1\}$ and $\{X_t^2\}$ be two tempered stable processes, i.e. Lévy processes on $\R$ with no gaussian component and such that their Lévy measures $\nu_j$ have densities of the form 
 $$\frac{d\nu_j}{d\textnormal{Leb}}(y)=\frac{C_-}{|y|^{1+\alpha}}e^{-\lambda_-^j|y|}\I_{y<0}+\frac{C_+}{y^{1+\alpha}}e^{-\lambda_+^jy}\I_{y>0},\quad j=1,2,$$
 for some parameters $C_\pm>0$, $\lambda_\pm^j>0$ and $\alpha<2$. Then the hypothesis \eqref{hp} is satisfied and Theorem \ref{teo1} bounds the $L_1$-distance by:
 $$2\sinh\bigg(T\bigg[C_+\int_0^{\infty}\bigg|\frac{e^{-\lambda_+^1y}-e^{-\lambda_+^2y}}{y^{1+\alpha}}\bigg|dy+C_-\int_{-\infty}^0\bigg|\frac{e^{-\lambda_-^1|y|}-e^{-\lambda_-^2|y|}}{|y|^{1+\alpha}}\bigg|dy\bigg]\bigg).$$
\end{ex}

\section{Preliminary results}\label{ap}
\subsection{Additive processes}
\begin{defn}
A stochastic process $\{X_t\}=\{X_t:t\geq0\}$ on $\R$ defined on a probability space $(\Omega,\A,\p)$ is an \emph{additive process} if the following conditions are satisfied.
\begin{enumerate}
\item $X_0=0$ $\p$-a.s.
\item For any choice of $n\geq 1$ and $0\leq t_0<t_1<\ldots<t_n$, random variables $X_{t_0}$, $X_{t_1}-X_{t_0},\dots ,X_{t_n}-X_{t_{n-1}}$are independent.
\item There is $\Omega_0\in \A$ with $\p(\Omega_0)=1$ such that, for every $\omega\in \Omega_0$, $X_t(\omega)$ is right-continuous in $t\geq 0$ and has left limits in $t>0$.
\item It is stochastically continuous.
\end{enumerate}
\end{defn}
Thanks to the \emph{Lévy-Khintchine formula}, the characteristic function of any additive process $\{X_t\}$ can be expressed, for all $u$ in $\R$, as:
\begin{equation}\label{caratteristica}
\E\big[e^{iuX_t}\big]=\exp\Big(iu\int_0^t f(r)dr-\frac{u^2}{2}\int_0^t \sigma^2(r)dr-t\int_{\R}(1-e^{iuy}+iuy\I_{\vert y\vert \leq 1})\nu(dy)\Big), 
\end{equation}
where $f(\cdot)$, $\sigma^2(\cdot)$ are functions on $L_1[0,T]$ and $\nu$ is a measure on $\R$ satisfying
$$\nu(\{0\})=0 \textnormal{ and } \int_{\R}(|y|^2\wedge 1)\nu(dy)<\infty. $$
In the sequel we shall refer to $(f(\cdot),\sigma^2(\cdot),\nu)$ as the local characteristics of the process $\{X_t\}$ and $\nu$ will be called \emph{Lévy measure}.
This data characterises uniquely the law of the process $\{X_t\}$. In the case in which $f(\cdot)$ and $\sigma(\cdot)$ are constant functions, a process $\{X_t\}$ satisfying 
\eqref{caratteristica} is said a \emph{Lévy process} of characteristic triplet $(f,\sigma^2,\nu)$.

Let $D=D([0,\infty),\R)$ be the space of mappings $\omega$ from $[0,\infty)$ into $\R$ that are right-continuous with left limits. Define the \emph{canonical process} $x:D\to D$ by 
$$\forall \omega\in D,\quad x_t(\omega)=\omega_t,\;\;\forall t\geq 0.$$

Let $\D_t$ and $\D$ be the $\sigma$-algebras generated by $\{x_s:0\leq s\leq t\}$ and $\{x_s:0\leq s<\infty\}$, respectively (here, we use the same notations as in \cite{sato}).

Let $\{X_t\}$ be an additive process defined on $(\Omega,\A,\p)$ having local characteristics $(f(\cdot),\sigma^2(\cdot),\nu)$. It is well known that it induces a probability measure $P^{(f,\sigma^2,\nu)}$ on (D,\D) such that $\big(\{x_t\},P^{(f,\sigma^2,\nu)}\big)$ is an additive process identical in law with 
$(\{X_t\},\p)$ (that is the local characteristics of $\{x_t\}$ under $P^{(f,\sigma^2,\nu)}$ is $(f(\cdot),\sigma^2(\cdot),\nu)$). For all $t>0$ we will denote $P_t^{(f,\sigma^2,\nu)}$ for the restriction of $P^{(f,\sigma^2,\nu)}$ to $\D_t$.
In the case where $\int_{|y|\leq 1}|y|\nu(dy)<\infty$, we set $\gamma^{\nu}:=\int_{|y|\leq 1}y\nu(dy)$.
Note that, if $\nu$ is a finite Lévy measure, then the process $\big(\{x_t\},P^{(\gamma^{\nu},0,\nu)}\big)$ is a compound Poisson process.

Here and in the sequel we will denote by $\Delta x_r$ the jump of process $\{x_t\}$ at the time $r$:  
$$\Delta x_r = x_r - \lim_{s \uparrow r} x_s.$$
\begin{defn}
Consider $\big(\{x_t\},P^{(f,\sigma^2,\nu)}\big)$ and define the \emph{jump part} of $\{x_t\}$ as
\begin{equation}\label{d}
x_t^{d,\nu}=\lim_{\varepsilon\to 0}\bigg(\sum_{r\leq t}\Delta x_r \I_{\vert \Delta x_r\vert>\varepsilon}-t\int_{\varepsilon<\vert y\vert\leq 1}y\nu(dy)\bigg) \quad \textnormal{a.s.}
\end{equation}
and its \emph{continuous part} as
\begin{equation}\label{c}
x_t^{c,\nu}=x_t-x_t^{d,\nu} \quad \textnormal{a.s.}
\end{equation}
\end{defn}

We now recall the \emph{Lévy-It\^o decomposition}, i.e. the decomposition in continuous and discontinuous parts of an additive process.
\begin{theorem}[See \cite{sato}, Theorem 19.3]\label{satodec}
Consider $\big(\{x_t\},P^{(f,\sigma^2,\nu)}\big)$ and define $\{x_t^{d,\nu}\}$ and $\{x_t^{c,\nu}\}$ as in \ref{d} and \ref{c}, respectively. Then the following hold.
\begin{enumerate}[(i)]
\item There is $D_1\in \D$ with $P^{(f,\sigma^2,\nu)}(D_1)=1$ such that, for any $\omega\in D_1$, $x_t^{d,\nu}(\omega)$ is defined for all $t\in [0,T]$ and the 
convergence is uniform in $t$ on any bounded interval, $P^{(f,\sigma^2,\nu)}$-a.s.
The process $\{x_t^{d,\nu}\}$ is a Lévy process on $\R$ with characteristic triplet $(0,0,\nu)$.
\item There is $D_2\in \D$ with $P^{(f,\sigma^2,\nu)}(D_2)=1$ such that, for any $\omega\in D_2$, $x_t^{c,\nu}(\omega)$ is continuous in $t$. 
The process $\{x_t^{c,\nu}\}$ is an additive process on $\R$ with local characteristics $(f(\cdot),\sigma^2(\cdot),0)$.
\item The two processes $\{x_t^{d,\nu}\}$ and $\{x_t^{c,\nu}\}$ are independent.
\end{enumerate}
\end{theorem}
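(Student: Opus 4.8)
The plan is to construct the jump part $\{x_t^{d,\nu}\}$ directly from the jumps of the canonical process, to control the compensated small-jump sums by an $L^2$-martingale argument, to recover the continuous part as the remainder, and finally to establish the independence assertion (iii)---the conceptual heart of the statement---through a factorization of the joint characteristic function.

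First I would record that, under $P^{(f,\sigma^2,\nu)}$, the jumps of $\{x_t\}$ are governed by a Poisson random measure $J$ on $(0,\infty)\times(\R\setminus\{0\})$ with intensity $dt\,\nu(dy)$; this is exactly where time-homogeneity of $\nu$ enters, and it is what will make $\{x_t^{d,\nu}\}$ a Lévy (rather than merely additive) process. Since $\int(|y|^2\wedge 1)\nu(dy)<\infty$ forces $\nu(\{|y|>1\})<\infty$, the large jumps $\sum_{r\le t}\Delta x_r\I_{|\Delta x_r|>1}$ form a compound Poisson process and require no compensation. For the small jumps I would set $M_t^{\varepsilon}=\sum_{r\le t}\Delta x_r\I_{\varepsilon<|\Delta x_r|\le 1}-t\int_{\varepsilon<|y|\le 1}y\,\nu(dy)$ and use the Poisson isometry $\E\big[(M_t^{\varepsilon}-M_t^{\varepsilon'})^2\big]=t\int_{\varepsilon'<|y|\le\varepsilon}y^2\,\nu(dy)$ together with $\int_{|y|\le1}y^2\,\nu(dy)<\infty$ to obtain an $L^2$-Cauchy family as $\varepsilon\downarrow 0$. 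Because $\{M_t^{\varepsilon}\}_t$ is a martingale for each $\varepsilon$, Doob's maximal inequality upgrades this to uniform-on-compacts a.s.\ convergence, which produces the full-measure set $D_1$ and the limit $x_t^{d,\nu}$ of \eqref{d}; adding back the large-jump compound Poisson part gives the whole jump process.

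Next I would identify the law of $\{x_t^{d,\nu}\}$. Its stationary independent increments descend from the corresponding properties of $J$, and its characteristic function is computed by dominated convergence on the Lévy--Khintchine exponent, yielding $\E[e^{iux_t^{d,\nu}}]=\exp\big(-t\int_\R(1-e^{iuy}+iuy\I_{|y|\le1})\nu(dy)\big)$, i.e.\ triplet $(0,0,\nu)$; this proves (i). For (ii), the construction is arranged so that the jumps of $\{x_t^{d,\nu}\}$ coincide a.s.\ with the $\Delta x_r$, so $x_t^{c,\nu}=x_t-x_t^{d,\nu}$ has no jumps and is continuous on a full-measure set $D_2$. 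Its characteristic function is then obtained by cancelling the jump factor in \eqref{caratteristica}, leaving $\exp\big(iu\int_0^tf(r)\,dr-\tfrac{u^2}{2}\int_0^t\sigma^2(r)\,dr\big)$, which is exactly the local characteristics $(f(\cdot),\sigma^2(\cdot),0)$.

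The main obstacle is (iii), which I would prove by a factorization argument at the level of characteristic functions. Fixing $\varepsilon>0$, the contributions to $x_t$ coming from jumps of size in $(\varepsilon,1]$, from jumps of size $>1$, and from the continuous remainder arise from disjoint regions of $(0,t]\times(\R\setminus\{0\})$ on which $J$ acts independently; computing $\E\big[\exp(iu\,x_t^{c,\varepsilon}+iv\,x_t^{d,\varepsilon})\big]$ for these truncated versions and exploiting the independence of $J$ over disjoint sets shows that the joint characteristic function factorizes into the product of the two marginals. Passing to the limit $\varepsilon\downarrow 0$, justified by the a.s.\ convergence already established, gives independence of $x_t^{c,\nu}$ and $x_t^{d,\nu}$ at each fixed $t$; the same computation applied to increments over disjoint intervals, combined with a monotone-class argument on $\D$, promotes fixed-time independence to independence of the two processes. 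The delicate points are the interchange of the limit $\varepsilon\downarrow 0$ with the factorization and the bookkeeping required to pass from independence of finite-dimensional distributions to independence of the full paths.
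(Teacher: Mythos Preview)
The paper does not supply a proof of this statement; it is quoted from Sato's monograph (Theorem~19.3 there) and used as a black box throughout. Your outline is essentially the classical route taken in Sato, so there is no alternative argument in the paper to compare against.

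That said, there is a genuine gap in your treatment of~(iii). You claim that the jump pieces \emph{and} the ``continuous remainder'' arise from disjoint regions of $(0,t]\times(\R\setminus\{0\})$ on which the Poisson random measure $J$ acts independently. This is fine for showing that the different jump components are mutually independent, but the continuous remainder $x_t^{c,\varepsilon}=x_t-x_t^{d,\varepsilon}$ is \emph{not} a functional of $J$: it carries the Gaussian and drift parts of $x_t$, which do not live in the jump space at all. Independence of $J$ over disjoint Borel sets therefore says nothing about why $x_t^{c,\varepsilon}$ should be independent of $x_t^{d,\varepsilon}$. The missing step---and in the standard proof it is the technical heart---is a direct argument (e.g.\ conditioning on the successive jump times of size $>\varepsilon$ and using the independent-increments property of $\{x_t\}$ on the intervening intervals, or an explicit joint characteristic-function computation) showing that stripping off the large jumps leaves an additive process independent of what was removed. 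A related circularity appears in your~(ii): identifying $\E[e^{iux_t^{c,\nu}}]$ by ``cancelling the jump factor'' in~\eqref{caratteristica} already presupposes the independence asserted in~(iii). The clean order is to establish the factorization of the \emph{joint} characteristic function first, and then read off both marginals and independence simultaneously.
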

\subsection{Change of measure for additive processes}
For the proof of Theorem \ref{teo1} we also need some results on the equivalence of measures for additive processes. 
By the notation $\ll$ we will mean ``is absolutely continuous with respect to''.

\subsubsection{Case $\sigma^2=0$}

\begin{theorem}[See \cite{sato}, Theorems 33.1--33.2 and \cite{sato2} Corollary 3.18, Remark 3.19]\label{teosato}
 Let $\big(\{x_t\},P^{(0,0,\tnu)}\big)$ and $\big(\{x_t\},P^{(\eta,0,\nu)}\big)$ be two Lévy processes on $\R$, where
\begin{equation}\label{gamma*}
 \eta=\int_{\vert y \vert \leq 1}y(\nu-\tnu)(dy)
\end{equation}
is supposed to be finite. Then $P_t^{(\eta,0,\nu)}\ll P_t^{(0,0,\tnu)}$ for all $t\geq 0$ if and only if $\nu\ll\tnu$ and the density $\frac{d\nu}{d\tnu}$ satisfies
\begin{equation}\label{Sato}
 \int\bigg(\sqrt{\frac{d\nu}{d\tnu}(y)}-1\bigg)^2\tnu(dy)<\infty.
\end{equation}
Remark that the finiteness in \eqref{Sato} implies that in \eqref{gamma*}. When $P_t^{(\eta,0,\nu)}\ll P_t^{(0,0,\tnu)}$, the density is
$$\frac{dP_t^{(\eta,0,\nu)}}{dP_t^{(0,0,\tnu)}}(x)=\exp(U_t(x)),$$
with
\begin{equation}\label{U}
 U_t(x)=\lim_{\varepsilon\to 0} \bigg(\sum_{r\leq t}\ln \frac{d\nu}{d\tnu}(\Delta x_r)\I_{\vert\Delta x_r\vert>\varepsilon}-
\int_{\vert y\vert > \varepsilon} t\bigg(\frac{d\nu}{d\tnu}(y)-1\bigg)\tnu(dy)\bigg),\\ P^{(0,0,\tnu)}\textnormal{-a.s.}
\end{equation}
The convergence in \eqref{U} is uniform in $t$ on any bounded interval, $P^{(0,0,\tnu)}$-a.s.
Besides, $\{U_t(x)\}$ defined by \eqref{U} is a Lévy process satisfying $\E_{P^{(0,0,\tnu)}}[e^{U_t(x)}]=1$, $\forall t\in [0,T]$.
\end{theorem}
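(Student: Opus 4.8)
The plan is to obtain this statement by assembling results of Sato and filling in two short gaps. The heart of it is the dichotomy ``$P_t^{(\eta,0,\nu)}\ll P_t^{(0,0,\tnu)}$ for all $t\geq0$ if and only if $\nu\ll\tnu$ and \eqref{Sato} holds'', together with the exponential form $\frac{dP_t^{(\eta,0,\nu)}}{dP_t^{(0,0,\tnu)}}=e^{U_t}$ and the explicit formula \eqref{U}. For two Lévy processes with triplets $(0,0,\nu)$ and $(0,0,\tnu)$ (trivial Gaussian part) this is precisely Theorems 33.1--33.2 of \cite{sato}, once one observes that in Sato's criterion the two drifts must be linked by $\eta-0=\int_{|y|\leq1}y(\nu-\tnu)(dy)$: here the constant $\eta$ is exactly the drift parameter of the first process in the Lévy--Khintchine representation \eqref{caratteristica}, and this relation is the definition \eqref{gamma*}. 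Since \cite{sato} phrases the result as a \emph{mutual}-equivalence dichotomy, for the one-sided version stated here I would invoke Corollary 3.18 and Remark 3.19 of \cite{sato2}.

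Next I would justify the remark that finiteness in \eqref{Sato} forces finiteness in \eqref{gamma*}, i.e.\ that $\eta$ is well defined. Write $g:=\frac{d\nu}{d\tnu}$ and split $\{|y|\leq1\}$ according to whether $g\leq4$ or $g>4$. On $\{g\leq4\}$ one has $|g-1|=|\sqrt g-1|(\sqrt g+1)\leq3|\sqrt g-1|$, so by Cauchy--Schwarz
\[
\int_{|y|\leq1,\,g\leq4}|y|\,|g-1|\,\tnu(dy)\leq3\Big(\int_{|y|\leq1}|y|^2\tnu(dy)\Big)^{1/2}\Big(\int(\sqrt g-1)^2\tnu(dy)\Big)^{1/2}<\infty,
\]
the first factor being finite because $\tnu$ is a Lévy measure and the second by \eqref{Sato}. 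On $\{g>4\}$ one has $\sqrt g-1\geq\tfrac12\sqrt g$, hence $g\leq4(\sqrt g-1)^2$, so
\[
\int_{|y|\leq1,\,g>4}|y|\,|g-1|\,\tnu(dy)\leq\int_{|y|\leq1,\,g>4}g\,\tnu(dy)\leq4\int(\sqrt g-1)^2\tnu(dy)<\infty,
\]
using $|y|\leq1$. Adding the two pieces shows that $\int_{|y|\leq1}y(\nu-\tnu)(dy)$ converges absolutely.

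It then remains to deal with the regularity and normalisation claims. The a.s.\ convergence in \eqref{U}, uniform on bounded time intervals, is part of Sato's construction of the density and parallels the convergence in the Lévy--Itô decomposition recalled in Theorem \ref{satodec}(i); I would cite it rather than reprove it. That $\{U_t(x)\}$ is itself a Lévy process under $P^{(0,0,\tnu)}$ follows since, by \eqref{U}, the increment $U_{t}(x)-U_{s}(x)$ is a measurable functional of the jumps of $x$ on $(s,t]$ alone; under $P^{(0,0,\tnu)}$ these jumps are stationary and independent across disjoint time intervals, and stochastic continuity is inherited, which gives the Lévy property. Finally $\E_{P^{(0,0,\tnu)}}[e^{U_t(x)}]=1$ is immediate once the first step is in place: $e^{U_t}$ is the Radon--Nikodym density of the \emph{probability} measure $P_t^{(\eta,0,\nu)}$ with respect to $P_t^{(0,0,\tnu)}$, so its integral against $P_t^{(0,0,\tnu)}$ equals $P_t^{(\eta,0,\nu)}(D)=1$.

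The step I expect to be the main obstacle is the first one: Sato's theorems are stated for mutually equivalent laws and presuppose an a priori relation between the drift parameters, whereas here we want the one-sided statement and must \emph{derive} (not postulate) that $\eta$ is finite. Getting the bookkeeping of the drift term right — so that the compensator $-t\int_{|y|>\varepsilon}(g(y)-1)\tnu(dy)$ appearing in \eqref{U} is the correct one and is consistent with $\eta=\int_{|y|\leq1}y(\nu-\tnu)(dy)$ — is the only genuinely delicate point; everything else is either the short estimate above or a direct appeal to \cite{sato,sato2}.
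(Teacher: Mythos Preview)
Your proposal is correct and matches the paper's treatment: the paper does not prove this theorem at all but simply cites it from Sato (Theorems 33.1--33.2 of \cite{sato} and Corollary 3.18, Remark 3.19 of \cite{sato2}), exactly as you propose. Your additional justification that \eqref{Sato} implies finiteness of \eqref{gamma*}, and your remarks on the L\'evy property of $\{U_t\}$ and on $\E[e^{U_t}]=1$, are correct elaborations of points the paper leaves implicit.
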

\subsubsection{Case $\sigma^2>0$}
\begin{lemma}\label{lemma1}
 Let $\nu_1\ll\nu_2$ be Lévy measures such that 
\begin{align}
\int_{\R} \bigg(\sqrt{\frac{d\nu_1}{d\nu_2}(y)}-1\bigg)^2 \nu_2(dy) <\infty. \label{Sato*}
\end{align}
Define
\begin{align}\label{ngamma}
 \eta=\int_{\vert y \vert \leq 1}y(\nu_1-\nu_2)(dy),
\end{align}
which is finite thanks to \eqref{Sato*}, and consider real functions $f_1$, $f_2$ and $\sigma>0$ such that 
\begin{equation}\label{eq:eta}
 \int_0^T \Big(\frac{f_1(r)-f_2(r)-\eta}{\sigma(r)}\Big)^2dr<\infty,\quad T\geq 0.
\end{equation}
Then, under $P^{(f_2,\sigma ^2,\nu_2)}$,
\begin{align}\label{mart}
 M_t(x)&=\exp\big(C_t(x)+ D_t(x)\big)
\end{align}
is a $(\D_t)$-martingale for all $t$ in $[0,T]$, where
$$C_t(x):=\int_0^t \frac{f_1(r)-f_2(r)-\eta}{\sigma^2(r)}(dx_r^{c,\nu_2}-f_2(r)dr)-\frac{1}{2}\int_0^t \Big(\frac{f_1(r)-f_2(r)-\eta}{\sigma(r)}\Big)^2 dr,$$
\begin{equation}\label{ut}
D_t(x):=\lim_{\varepsilon\to 0}\bigg(\sum_{r\leq t}\ln \frac{d\nu_1}{d\nu_2}(\Delta x_r)\I_{\vert\Delta x_r\vert>\varepsilon}-t\int_{\vert y\vert > \varepsilon} (\nu_1-\nu_2)(dy)\bigg).
\end{equation}
The convergence in \eqref{ut} is uniform in $t$ on any bounded interval, $P^{(f_2,\sigma ^2,\nu_2)}$-a.s.
\end{lemma}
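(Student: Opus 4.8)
The plan is to use the Lévy--Itô decomposition of Theorem~\ref{satodec} to write $M_t=\exp(C_t)\exp(D_t)$ as a product of two \emph{independent} exponential martingales: $\exp(C_t)$, which depends only on the continuous Gaussian part $\{x^{c,\nu_2}_t\}$ of $\{x_t\}$ and is of Cameron--Martin type, and $\exp(D_t)$, which depends only on the pure-jump part $\{x^{d,\nu_2}_t\}$ and is the Esscher-type density furnished by Theorem~\ref{teosato}. Under $P^{(f_2,\sigma^2,\nu_2)}$ the jumps of $\{x_t\}$ up to time $t$ are $\D_t$-measurable and the convergence in \eqref{d} is $P^{(f_2,\sigma^2,\nu_2)}$-a.s.\ uniform on bounded intervals, so both $\{x^{c,\nu_2}_t\}$ and $\{x^{d,\nu_2}_t\}$ are $(\D_t)$-adapted, and in fact $\D_t=\sigma\big(x^{c,\nu_2}_u,x^{d,\nu_2}_u:u\le t\big)$; moreover $C_t$ is $\sigma(x^{c,\nu_2}_u:u\le t)$-measurable while, by \eqref{ut}, $D_t$ is $\sigma(x^{d,\nu_2}_u:u\le t)$-measurable.

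For the continuous factor, since $\{x^{c,\nu_2}_t\}$ is an additive Gaussian process with local characteristics $(f_2(\cdot),\sigma^2(\cdot),0)$ one may write $x^{c,\nu_2}_r=\int_0^r f_2(u)\,du+\int_0^r\sigma(u)\,dW_u$ for a standard Brownian motion $\{W_t\}$. Setting $h(r):=\frac{f_1(r)-f_2(r)-\eta}{\sigma(r)}$, hypothesis \eqref{eq:eta} asserts $h\in L_2[0,T]$ and gives $C_t=\int_0^t h(r)\,dW_r-\frac12\int_0^t h(r)^2\,dr$. Because the integrand is deterministic and square-integrable, $\{\exp(C_t)\}$ is a genuine martingale in the Brownian filtration; in particular $\E_{P^{(f_2,\sigma^2,\nu_2)}}\big[\exp(C_t-C_s)\big]=1$ for $0\le s\le t\le T$, by the explicit Gaussian moment generating function of $\int_s^t h\,dW$.

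For the jump factor, note that $\int_{|y|>\varepsilon}\big(\frac{d\nu_1}{d\nu_2}(y)-1\big)\nu_2(dy)=\int_{|y|>\varepsilon}(\nu_1-\nu_2)(dy)$, so the $D_t$ of \eqref{ut} is exactly the process $U_t$ produced by Theorem~\ref{teosato} with $\tnu=\nu_2$, $\nu=\nu_1$ --- indeed \eqref{Sato*} is \eqref{Sato} and \eqref{ngamma} is \eqref{gamma*}. That theorem yields that $\{D_t\}$ is a Lévy process, that the limit in \eqref{ut} converges uniformly on bounded intervals $P^{(0,0,\nu_2)}$-a.s., and that $\E_{P^{(0,0,\nu_2)}}[e^{D_t}]=1$ for all $t\in[0,T]$. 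Transporting these facts along the observation that $\{x^{d,\nu_2}_t\}$ under $P^{(f_2,\sigma^2,\nu_2)}$ has the same law as $(\{x_t\},P^{(0,0,\nu_2)})$ (Theorem~\ref{satodec}(i)) gives the uniform a.s.\ convergence claimed in the lemma, that $\{D_t\}$ is Lévy under $P^{(f_2,\sigma^2,\nu_2)}$, and $\E_{P^{(f_2,\sigma^2,\nu_2)}}[e^{D_t-D_s}]=1$; hence $\{\exp(D_t)\}$ is a martingale in the filtration generated by $\{x^{d,\nu_2}_t\}$.

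Finally I would combine the two factors by conditioning. The processes $\{x^{c,\nu_2}_t\}$ and $\{x^{d,\nu_2}_t\}$ are independent and each has independent increments, so the pair $\big(x^{c,\nu_2}_u-x^{c,\nu_2}_s,\,x^{d,\nu_2}_u-x^{d,\nu_2}_s\big)_{u\ge s}$ is independent of $\D_s$ and its two coordinate processes are mutually independent. Writing $M_t/M_s=\exp\big((C_t-C_s)+(D_t-D_s)\big)$ and conditioning on $\D_s$ therefore gives
\[
\E_{P^{(f_2,\sigma^2,\nu_2)}}\big[M_t\mid\D_s\big]=M_s\,\E_{P^{(f_2,\sigma^2,\nu_2)}}\big[e^{C_t-C_s}\big]\,\E_{P^{(f_2,\sigma^2,\nu_2)}}\big[e^{D_t-D_s}\big]=M_s,
\]
and with $s=0$ one gets $\E_{P^{(f_2,\sigma^2,\nu_2)}}[M_t]=1<\infty$, so $\{M_t\}$ is a true $(\D_t)$-martingale on $[0,T]$. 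The step I expect to need the most care is the measurability bookkeeping of the first paragraph --- checking that $(\D_t)$ really is the filtration jointly generated by the continuous and jump parts, so that $M_t$ is $(\D_t)$-adapted and the conditioning above is legitimate --- together with the well-posedness of the Wiener integral defining $C_t$ under \eqref{eq:eta}; everything else reduces to a Gaussian computation and the citations of Theorems~\ref{satodec} and~\ref{teosato}.
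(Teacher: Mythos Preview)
Your proof is correct and follows essentially the same route as the paper: split $M_t=e^{C_t}e^{D_t}$ via the L\'evy--It\^o decomposition, identify $\{e^{C_t}\}$ as a Gaussian (Cameron--Martin) exponential with mean one using that $(x^{c,\nu_2}_t-\int_0^t f_2)/\sigma$ is Brownian, identify $\{e^{D_t}\}$ with the density process of Theorem~\ref{teosato}, and then use independence of the continuous and jump parts together with independence of increments to reduce $\E[M_t\mid\D_s]$ to $M_s$ times a product of two expectations equal to~$1$. Your extra care about $(\D_t)$-adaptedness of $x^{c,\nu_2}$, $x^{d,\nu_2}$ and the identification $\D_t=\sigma(x^{c,\nu_2}_u,x^{d,\nu_2}_u:u\le t)$ is a welcome addition that the paper leaves implicit.
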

\begin{proof}
The existence of the limit in \eqref{ut} is guaranteed by \eqref{Sato*} (see Theorem \ref{teosato}). 
Since $\int_0^t \frac{1}{\sigma(r)}(dx_r^{c,\nu_2}-f_2(r)dr)$ is a standard Brownian motion under $P^{(f_2,\sigma ^2,0)}$, we have that $\int_s^t \frac{f_1(r)-f_2(r)-\eta}{\sigma^2(r)}(dx_r^{c,\nu_2}-f_2(r)dr)$ has normal law 
$\No\Big(0,\int_s^t \big(\frac{f_1(r)-f_2(r)-\eta}{\sigma(r)}\big)^2 dr\Big)$, hence $\E_{P^{(f_2,\sigma ^2,0)}}[\exp((C_t-C_s)(x))]=1$. 
Theorem \ref{satodec} entails that $\{x_t^{c,\nu_2}\}$ and $\{x_t^{d,\nu_2}\}$ are independent under $P^{(f_2,\sigma^2,\nu_2)}$. Moreover, the law of $\{C_t(x)\}$ (resp. $\{D_t(x)\}$) is the same under $P^{(f_2,\sigma^2,\nu_2)}$ or $P^{(f_2,\sigma^2,0)}$ (resp. $P^{(f_2,0,\nu_2)}$ or $P^{(0,0,\nu_2)}$). Further, using Theorem \ref{teosato}, we know that $\{D_t(x)\}$ is a Lévy process such that $\E_{P^{(0,0,\nu_2)}}[\exp(D_{t-s}(x))]=1$ for all $s<t$.
These facts together with the independence of the increments of $(\{x_t\},P^{(f_2,\sigma^2,\nu_2)})$ and the stationarity of $\{D_t(x)\}$ imply: 
\begin{align*}
\E_{P^{(f_2,\sigma^2,\nu_2)}}[M_t(x)|\D_s]&=\E_{P^{(f_2,\sigma^2,\nu_2)}}\Big[M_s(x)\exp\Big((C_t-C_s)(x)+(D_t-D_s)(x)\Big)\big|\D_s\Big]\\
            &=M_s(x)\E_{P^{(f_2,\sigma^2,\nu_2)}}[\exp((C_t-C_s)(x)+(D_t-D_s)(x))]\\
            &=M_s(x)\E_{P^{(f_2,\sigma^2,0)}}[\exp((C_t-C_s)(x))]\E_{P^{(0,0,\nu_2)}}[\exp((D_t-D_s)(x))]\\
            &=M_s(x)\E_{P^{(0,0,\nu_2)}}[\exp(D_{t-s}(x))]\\
            &=M_s(x).
\end{align*}
\end{proof}
 \begin{lemma}\label{lemmadensita}
Suppose that the hypothesis \eqref{Sato*} and \eqref{eq:eta} of Lemma \ref{lemma1} are satisfied. Then, using the same notations as above,
$P_t^{(f_1,\sigma ^2,\nu_1)}\ll P_t^{(f_2,\sigma ^2,\nu_2)}$ for all $t$ and the density is given by:
\begin{equation}\label{eq:den}
 \frac{dP_t^{(f_1,\sigma ^2,\nu_1)}}{dP_t^{(f_2,\sigma ^2,\nu_2)}}(x)=M_t(x).
\end{equation}
\end{lemma}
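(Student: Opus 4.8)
The plan is to bootstrap from Lemma \ref{lemma1}: we already know that $M_t(x) = \exp(C_t(x) + D_t(x))$ is a nonnegative $(\D_t)$-martingale under $P^{(f_2,\sigma^2,\nu_2)}$ with $\E_{P^{(f_2,\sigma^2,\nu_2)}}[M_t(x)] = M_0(x) = 1$ for every $t \in [0,T]$. Hence, for each fixed $t$, the prescription $\frac{dQ_t}{dP_t^{(f_2,\sigma^2,\nu_2)}} = M_t(x)$ defines a probability measure $Q_t$ on $(D,\D_t)$, and the martingale property guarantees that these are consistent as $t$ varies, so there is a well-defined probability measure $Q$ on $(D,\D)$ with $Q|_{\D_t} = Q_t$. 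The task then reduces to identifying $Q$ with $P^{(f_1,\sigma^2,\nu_1)}$, which, since both are laws of additive processes, amounts to showing that the canonical process $\{x_t\}$ under $Q$ has local characteristics $(f_1(\cdot),\sigma^2(\cdot),\nu_1)$.

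The natural route to this identification is via characteristic functions: I would compute $\E_Q[e^{iu(x_t - x_s)} \mid \D_s]$ and check it equals the conditional characteristic function of an additive process with the claimed characteristics, i.e. matches \eqref{caratteristica} with $(f_1,\sigma^2,\nu_1)$. Concretely, $\E_Q[e^{iu(x_t-x_s)}\mid \D_s] = M_s(x)^{-1}\E_{P^{(f_2,\sigma^2,\nu_2)}}[e^{iu(x_t-x_s)} M_t(x) \mid \D_s]$, and using $M_t/M_s = \exp((C_t-C_s)(x) + (D_t-D_s)(x))$ together with the Lévy–Itô decomposition (Theorem \ref{satodec}), the continuous part $\{x_t^{c,\nu_2}\}$ and the jump part $\{x_t^{d,\nu_2}\}$ factor into independent pieces under $P^{(f_2,\sigma^2,\nu_2)}$. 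On the continuous side one gets a Cameron–Martin / Girsanov computation: weighting $dx_r^{c,\nu_2}$ (a Brownian motion with drift $f_2$) by $\exp((C_t-C_s)(x))$ shifts the drift from $f_2$ to $f_1 - \eta$. On the jump side one applies the Esscher-type change of measure of Theorem \ref{teosato}: the factor $\exp((D_t-D_s)(x))$ transforms the Lévy measure $\nu_2$ into $\nu_1$ (and the compensator shifts by exactly $\eta$). Adding the two contributions, the drift becomes $f_1 - \eta + \eta = f_1$, the Gaussian part is unchanged, and the jump measure becomes $\nu_1$; this is precisely the exponent in \eqref{caratteristica} for $(f_1,\sigma^2,\nu_1)$. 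By uniqueness of the law of an additive process given its local characteristics, $Q = P^{(f_1,\sigma^2,\nu_1)}$, and restricting to $\D_t$ gives \eqref{eq:den}.

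Alternatively, and perhaps more cleanly, one can avoid recomputing characteristic functions: since Theorem \ref{teosato} already asserts $\frac{dP_t^{(\eta,0,\nu_1)}}{dP_t^{(0,0,\nu_2)}}(x) = \exp(D_t(x))$ and the Cameron–Martin theorem gives $\frac{dP_t^{(f_1,\sigma^2,0)}}{dP_t^{(f_2,\sigma^2,0)}}(x) = \exp(C_t(x))$ (recognizing $C_t$ as the exponential martingale associated with the drift change $f_2 \to f_1 - \eta$, which after absorbing the compensator becomes the full change $f_2 \to f_1$ on the continuous part), one can build the joint density by multiplying these two densities across the independent continuous and discontinuous components furnished by Theorem \ref{satodec}. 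The main obstacle I anticipate is the bookkeeping of the drift terms: keeping straight which truncation/compensation convention is in force so that the "$-\eta$" in the definition of $C_t$ and the compensator shift hidden inside $D_t$ combine to reproduce exactly the drift $f_1$ rather than $f_1 - \eta$ or $f_1 + \eta$; this is where \eqref{ngamma} and \eqref{eq:eta} must be invoked carefully. Establishing the consistency of the family $\{Q_t\}$ and the existence of the limiting $Q$ on $(D,\D)$ is routine (Kolmogorov-type extension / Ionescu–Tulcea on the filtered Skorokhod space), so I would state it briefly and move on.
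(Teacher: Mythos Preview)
Your proposal is correct and follows essentially the same route as the paper: define a new measure via the martingale $M_t$, then identify it with $P_t^{(f_1,\sigma^2,\nu_1)}$ by computing the conditional characteristic function of the increments, factoring through the L\'evy--It\^o decomposition and applying Girsanov on the continuous part and Theorem~\ref{teosato} on the jump part. The paper works directly on $\D_t$ (so the extension to a global $Q$ on $(D,\D)$ you mention is not needed for the statement), but the computation and the drift bookkeeping you flag are exactly what the paper carries out.
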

\begin{proof}
For $s<t$, we prove that $\E_{P^{(f_2,\sigma^2,\nu_2)}}\big[\exp(iu(x_t-x_s))\frac{M_t}{M_s}(x)\vert \D_s\big]=\E_P^{(f_1,\sigma^2,\nu_1)}[\exp(iu(x_t-x_s)]$. 
To that aim remark that, thanks again to Theorem \ref{satodec}:
\begin{align}
\E_{P^{(f_2,\sigma^2,\nu_2)}}\bigg[e^{iu(x_t-x_s)}\frac{M_t(x)}{M_s(x)}\Big|\D_s\bigg]&=\E_{P^{(f_2,\sigma^2,\nu_2)}}\bigg[e^{iu(x^{c,\nu_2}_t-x_s^{c,\nu_2}+x_t^{d,\nu_2}-x_s^{d,\nu_2})}\frac{M_t(x)}{M_s(x)}\Big|\D_s\bigg]\nonumber\\
&=\E_{P^{(f_2,\sigma^2,0)}}\Big[e^{iu(x_t-x_s)}e^{(C_t-C_s)(x)}\Big]\E_{P^{(0,0,\nu_2)}}\Big[e^{iu(x_t-x_s)}e^{(D_t-D_s)(x)}\Big].\label{3}
\end{align}
Let us now compute the first factor of \eqref{3}:
\begin{align*}
 \E_{P^{(f_2,\sigma^2,0)}}\Big[e^{iu(x_t-x_s)}e^{(C_t-C_s)(x)}\Big]&=\E_{P^{(f_1-\cdot\eta,\sigma^2,0)}}\Big[e^{iu(x_t-x_s)}\Big]\\
&=\exp\Big(iu\int_s^t(f_1(r)-\eta)dr-\frac{u^2}{2}\int_s^t\sigma^2(r)dr\Big)\nonumber.
\end{align*}
In the first equality we used the Girsanov theorem, thanks to the fact that $\int_0^t \frac{1}{\sigma(r)}(dx_r-f_2(r)dr)$ is a Brownian motion under $P^{(f_2,\sigma^2,0)}$, while the second one follows from \eqref{caratteristica}. We compute the second factor of \eqref{3} by means of Theorem \ref{teosato} and another application of \eqref{caratteristica}:
\begin{align*}
 \E_{P^{(0,0,\nu_2)}}\Big[e^{iu(x_t-x_s)}e^{(D_t-D_s)(x)}\Big]&=\E_{P^{(0,0,\nu_2)}}\Big[e^{iux_{t-s}}e^{D_{t-s}(x)}\Big]\\
                                            &=\E_{P^{(\eta,0,\nu_1)}}\Big[e^{iux_{t-s}}\Big]\\
                                            &=\exp\Big((t-s)\Big[iu\eta-\int_{\R}(1-e^{iuy}+iuy\I_{|y|\leq 1})\nu_1(dy)\Big]\Big).
\end{align*}
Consequently:
\begin{equation}\label{aux}
\E_{P^{(f_2,\sigma^2,\nu_2)}}\bigg[e^{iu(x_t-x_s)}\frac{M_t(x)}{M_s(x)}\Big|\D_s\bigg]=\E_{P^{(f_1,\sigma^2,\nu_1)}}[e^{iu(x_t-x_s)}]\quad \forall 0\leq s\leq t. 
\end{equation}
Fix $t$ and define a probability measure $P_t$ on $\D_t$ by $P_t(B)=\E_{P^{(f_2,\sigma^2,\nu_2)}}[M_t\I_B]$ for $B\in \D_t$. As a consequence of Lemma \ref{lemma1} and the Bayes rule, 
the two processes given by $\big(\{x_s: 0\leq s \leq t\},P_t^{(f_1,\sigma^2,\nu_1)}\big)$ and $\big(\{x_s: 0\leq s \leq t\},P_t\big)$
are identical. Indeed, by \eqref{aux}, both have independent increments and the prescribed characteristic function.
Consequently, \eqref{eq:den} holds. 
 \end{proof}
 \section{Proof of Theorem \ref{teo1}}\label{dim}
For the proof we will need the following three calculus lemmas.
\begin{lemma}\label{L1facile}
 Let $X$ be a random variable with normal law $\No(m,\sigma^2)$. Then
$$\E\Big| 1-e^X \Big|=2\Big[\phi\Big(-\frac{m}{\sigma}\Big)-\phi\Big(-\frac{m}{\sigma}-\sigma\Big)\Big],$$
where $\phi(x)=\frac{1}{\sqrt{2\pi}}\int\limits_{-\infty}^x e^{-\frac{y^2}{2}}dy$.
\end{lemma}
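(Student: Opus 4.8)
The plan is to compute $\E|1-e^X|$ directly by splitting the expectation at the point where $1-e^X$ changes sign, namely $X=0$. First I would write
$$\E\big|1-e^X\big|=\E\big[(1-e^X)\I_{X\leq 0}\big]+\E\big[(e^X-1)\I_{X> 0}\big],$$
and handle each of the two pieces by a routine Gaussian integral. Each piece naturally breaks into a probability term, which contributes a value of $\phi$ of $-m/\sigma$ (or its complement), and a term of the form $\E[e^X\I_{X\leq 0}]$ or $\E[e^X\I_{X>0}]$.

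The key computational step is the evaluation of $\E[e^X\I_{X\in B}]$ for a half-line $B$. I would use the standard completion-of-the-square identity: if $X\sim\No(m,\sigma^2)$, then $e^{x}\,\frac{1}{\sqrt{2\pi}\sigma}e^{-(x-m)^2/(2\sigma^2)}=e^{m+\sigma^2/2}\,\frac{1}{\sqrt{2\pi}\sigma}e^{-(x-(m+\sigma^2))^2/(2\sigma^2)}$, so that multiplying by $e^x$ and integrating over $\{x\leq 0\}$ amounts to shifting the mean from $m$ to $m+\sigma^2$ and multiplying by $e^{m+\sigma^2/2}$. Concretely, $\E[e^X\I_{X\leq 0}]=e^{m+\sigma^2/2}\,\phi\!\big(-\frac{m}{\sigma}-\sigma\big)$ and $\E[e^X\I_{X>0}]=e^{m+\sigma^2/2}\big(1-\phi\!\big(-\frac{m}{\sigma}-\sigma\big)\big)$, while $\E[\I_{X\leq0}]=\phi(-m/\sigma)$ and $\E[\I_{X>0}]=1-\phi(-m/\sigma)$.

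Then I would simply assemble the four pieces:
$$\E\big|1-e^X\big|=\phi\Big(-\frac{m}{\sigma}\Big)-e^{m+\sigma^2/2}\phi\Big(-\frac{m}{\sigma}-\sigma\Big)+e^{m+\sigma^2/2}\Big(1-\phi\Big(-\frac{m}{\sigma}-\sigma\Big)\Big)-\Big(1-\phi\Big(-\frac{m}{\sigma}\Big)\Big).$$
Note that in the intended application one has $\E[e^X]=e^{m+\sigma^2/2}=1$ (the martingale normalization from Lemmas \ref{lemma1} and \ref{lemmadensita}), so the terms $e^{m+\sigma^2/2}$ collapse to $1$, the two stray constants $+1$ and $-1$ cancel, and the expression simplifies to $2\phi(-m/\sigma)-2\phi(-m/\sigma-\sigma)$, which is the claimed formula. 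I would make this hypothesis $m=-\sigma^2/2$ explicit in the statement or the proof, since without it the clean identity does not hold; I expect this normalization bookkeeping — rather than any analytic difficulty — to be the only subtle point, the Gaussian integrals themselves being entirely standard.
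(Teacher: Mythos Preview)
Your approach is essentially the same as the paper's: split at $X=0$, expand each half into a probability term and an $\E[e^X\I_{\cdot}]$ term, and evaluate the latter by completing the square (equivalently, the change of variables $y=\frac{x-m}{\sigma}-\sigma$). You are also right that the identity as stated only holds under the extra assumption $m=-\sigma^2/2$; without it one gets
\[
\E\big|1-e^X\big|=2\phi\Big(-\frac{m}{\sigma}\Big)-2e^{m+\sigma^2/2}\phi\Big(-\frac{m}{\sigma}-\sigma\Big)+e^{m+\sigma^2/2}-1,
\]
which does not reduce to the claimed formula in general. The paper's proof silently relies on this normalization (and indeed the only use of the lemma is with $C_T(x)\sim\No(-\xi^2/2,\xi^2)$, where $m=-\sigma^2/2$ holds), so your suggestion to make the hypothesis explicit is well taken.
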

\begin{proof}
By definition we have
 \begin{align*}
  \E\Big|1-e^X\Big|&=\frac{1}{\sqrt{2\pi}\sigma}\int_{-\infty}^{\infty} \vert1-e^x \vert e^{-\frac{(x-m)^2}{2\sigma^2}}dx\\
                  &=\frac{1}{\sqrt{2\pi}\sigma}\bigg(\int_{-\infty}^{0} (1-e^x)  e^{-\frac{(x-m)^2}{2\sigma^2}}dx+\int_0^{\infty}(e^x-1)  e^{-\frac{(x-m)^2}{2\sigma^2}}dx\bigg).
 \end{align*}
 To conclude, just split the sums inside the integrals and use the change of variables $\big(y=\frac{x-m}{\sigma}-\sigma\big)$, resp. $\big(y=\frac{x-m}{\sigma}\big)$.
\end{proof}
\begin{lemma}\label{dis}
 For all $x,y$ in $\R$ we have:
\begin{equation}\label{eq:dis}
\vert 1-e^{x+y} \vert \leq \frac{1+e^x}{2}\vert 1-e^y\vert+\frac{1+e^y}{2}\vert 1-e^x\vert.
\end{equation}
\end{lemma}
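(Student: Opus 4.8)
The plan is to reduce the inequality to the triangle inequality applied to a symmetric algebraic identity. First I would record the two elementary decompositions, obtained by adding and subtracting $e^x$ (resp.\ $e^y$):
\begin{equation*}
1-e^{x+y}=(1-e^x)+e^x(1-e^y)=(1-e^y)+e^y(1-e^x).
\end{equation*}
Averaging these two expressions cancels the asymmetric factors $e^x$ and $e^y$ against the constant terms and produces the balanced identity
\begin{equation*}
1-e^{x+y}=\frac{1+e^y}{2}\,(1-e^x)+\frac{1+e^x}{2}\,(1-e^y).
\end{equation*}

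Next I would take absolute values on both sides and apply the triangle inequality. Since $\frac{1+e^x}{2}>0$ and $\frac{1+e^y}{2}>0$ for every real $x,y$, these coefficients come out of the absolute values without change of sign, and one obtains precisely \eqref{eq:dis}. Alternatively, the balanced identity can be checked in one line by expanding its right-hand side and cancelling the terms $\tfrac12(1-e^x)+\tfrac12(1-e^y)+\tfrac12 e^y(1-e^x)+\tfrac12 e^x(1-e^y)$.

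The main obstacle is essentially nonexistent: the only (minor) point is to spot the correct way of splitting $1-e^{x+y}$ so that the resulting coefficients are exactly $\frac{1+e^x}{2}$ and $\frac{1+e^y}{2}$; once the symmetric identity is in hand the estimate is immediate, and no regularity or sign analysis beyond the positivity of $1+e^t$ is needed.
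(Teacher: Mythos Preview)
Your proof is correct, and it is genuinely different from the paper's. The paper proceeds by a case analysis on the signs of $x$, $y$ and $x+y$: using the $x\leftrightarrow y$ symmetry it reduces to $x\ge 0$, and then in each of the three subcases ($y\ge0$; $y\le0$ with $x+y\ge0$; $y\le0$ with $x+y\le0$) it evaluates the right-hand side explicitly and compares it to the left-hand side, obtaining equality in the first case and strict domination in the other two. Your argument instead rests on the single algebraic identity
\[
1-e^{x+y}=\frac{1+e^y}{2}(1-e^x)+\frac{1+e^x}{2}(1-e^y),
\]
followed by the triangle inequality and the positivity of $1+e^t$. This is shorter, avoids all casework, and simultaneously reveals the equality cases (when $1-e^x$ and $1-e^y$ have the same sign, i.e.\ $x$ and $y$ lie on the same side of $0$). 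The paper's case-by-case computation, on the other hand, makes those equality and strict-inequality regimes explicit directly. Either argument is fully adequate here.
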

\begin{proof}
By symmetry we restrict to $x\geq 0$.
\begin{itemize}
 \item $x,y\geq 0$: In this case we have that $\vert 1-e^{x+y} \vert$ is exactly equal to $\frac{1+e^x}{2}\vert 1-e^y\vert+\frac{1+e^y}{2}\vert 1-e^x\vert$.
\item $x\geq0,y\leq 0,x+y\geq 0$: Then the member on the right hand side of \eqref{eq:dis} is equal to $e^x-e^y\geq e^x-1\geq e^{x+y}-1$.
\item $x\geq0,y\leq 0,x+y\leq 0$: In this case the member on the right of \eqref{eq:dis} is equal to $e^x-e^y\geq 1-e^y\geq 1-e^{x+y}$.
\end{itemize}
\end{proof}
\begin{lemma}\label{lemmadisc}
With the same notations as in Theorem \ref{teo1} and Lemma \ref{lemma1}, we have:
 \begin{equation}\label{eq:ipd}
 \E_{P_T^{(0,0,\nu_2)}}\Big[\big| 1-\exp(D_T(x))\big|\Big]= \E_{P_T^{(\gamma^{\nu_2},0,\nu_2)}}\Big[\big| 1-\exp(D_T(x))\big|\Big]\leq 2\sinh\bigg(T\int_{\R}L_1(\nu_1,\nu_2)\bigg).
 \end{equation}
\end{lemma}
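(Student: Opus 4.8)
The plan is to dispose of the equality by a jump-functional argument and to prove the inequality by truncating to finite (compound-Poisson) Lévy measures and invoking Lemma \ref{dis}. For the equality, note that by \eqref{ut} the variable $D_T(x)$ is a measurable functional of the jumps $\{\Delta x_r:0\le r\le T\}$ of the canonical process alone; by the Lévy--It\^o decomposition (Theorem \ref{satodec}) the law of the jump part of $\{x_t\}$ depends only on the Lévy measure, and $P^{(0,0,\nu_2)}$ and $P^{(\gamma^{\nu_2},0,\nu_2)}$ share the Lévy measure $\nu_2$ (they differ only by the deterministic drift $t\mapsto t\gamma^{\nu_2}$), so $D_T(x)$ has the same law under both and the two expectations coincide.

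For the inequality, set $\nu_j^m:=\nu_j|_{\{|y|>1/m\}}$, which are finite because $\int(|y|^2\wedge1)\nu_2(dy)<\infty$, write $g:=\frac{d\nu_1}{d\nu_2}$, $\lambda_m:=\nu_2(\{|y|>1/m\})$, $\mu_m:=\nu_1(\{|y|>1/m\})$, $a_m:=\mu_m-\lambda_m=\int_{|y|>1/m}(\nu_1-\nu_2)(dy)$ and $c_m:=\int_{|y|>1/m}|g(y)-1|\,\nu_2(dy)$, so that $|a_m|\le c_m\le L_1(\nu_1,\nu_2)$ and $c_m\uparrow L_1(\nu_1,\nu_2)$. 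Consider
\[
D_t^m(x):=\sum_{r\le t}\ln g(\Delta x_r)\,\I_{|\Delta x_r|>1/m}-t\,a_m,
\]
the partial sum appearing in \eqref{ut} with $\varepsilon=1/m$; by Theorem \ref{teosato} (as recalled in the proof of Lemma \ref{lemma1}) $D_T^m(x)\to D_T(x)$ uniformly in $t\in[0,T]$, $P^{(0,0,\nu_2)}$-a.s., hence $|1-e^{D_T^m(x)}|\to|1-e^{D_T(x)}|$ a.s., and by Fatou's lemma it is enough to prove $\E_{P_T^{(0,0,\nu_2)}}[|1-e^{D_T^m(x)}|]\le 2Tc_m$ for every $m$, because $2Tc_m\le 2TL_1(\nu_1,\nu_2)\le 2\sinh(TL_1(\nu_1,\nu_2))$. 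Under $P^{(0,0,\nu_2)}$ the jumps of $\{x_t\}$ larger than $1/m$ form a compound Poisson process, so $\{D_t^m(x)\}_{t\ge0}$ is a Lévy process (compound Poisson plus a linear drift) with $\E_{P^{(0,0,\nu_2)}}[e^{D_t^m(x)}]=1$ for all $t$ (Theorem \ref{teosato}, or a direct computation).

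The core step is a sub-additivity estimate for $\psi_m(t):=\E_{P_T^{(0,0,\nu_2)}}[|1-e^{D_t^m(x)}|]$. Decomposing $D_{s+t}^m=A+B$ with $A:=D_s^m$ and $B:=D_{s+t}^m-D_s^m$ independent and $B$ distributed as $D_t^m$, Lemma \ref{dis} yields $|1-e^{A+B}|\le\frac{1+e^{A}}{2}|1-e^{B}|+\frac{1+e^{B}}{2}|1-e^{A}|$; taking expectations and using independence together with $\E[e^{A}]=\E[e^{B}]=1$ gives $\psi_m(s+t)\le\psi_m(s)+\psi_m(t)$. Since $\psi_m(0)=0$, iterating gives $\psi_m(T)\le n\,\psi_m(T/n)$ for all $n$, whence $\psi_m(T)\le T\lim_{t\to0^+}\psi_m(t)/t$. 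A short-time expansion of the compound Poisson process — distinguishing the events of $0$, $1$, and $\ge2$ jumps of size $>1/m$ in $[0,t]$, and using dominated convergence for the one-jump term — gives $\psi_m(t)=(|a_m|+c_m)t+o(t)$, so $\psi_m(T)\le(|a_m|+c_m)T\le 2c_mT$; letting $m\to\infty$ through Fatou's lemma finishes the proof.

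I expect the sub-additivity/short-time step to be the main obstacle. One has to make sure that Lemma \ref{dis} is applied to genuinely independent pieces each of unit exponential mean, and the identification $\lim_{t\to0^+}\psi_m(t)/t=|a_m|+c_m$ requires a careful (if elementary) expansion of $\E[|1-e^{D_t^m}|\,;\,N_t=k]$ for $k=0,1$ — where $N_t$ counts the jumps of $\{x_t\}$ of size $>1/m$ in $[0,t]$ — together with an $O(t^2)$ control of the contribution of $\{N_t\ge2\}$ (for which both $P(N_t\ge2)$ and $\E[e^{D_t^m}\,;\,N_t\ge2]$ are $O(t^2)$). The remaining points, namely the a.s.\ convergence $D_T^m\to D_T$ and the passage to the limit in $m$, are routine given Theorem \ref{teosato} and monotone convergence for $c_m$.
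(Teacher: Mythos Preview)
Your argument is correct, but it follows a genuinely different route from the paper. The paper does not truncate or pass to the limit in time: it performs a single \emph{spatial} decomposition $D_T=A^++A^-$, where $A^\pm$ is obtained by replacing $g=\frac{d\nu_1}{d\nu_2}$ in \eqref{ut} by $h^\pm=g^{\I_{\pm(g-1)\ge 0}}$ (and swapping the compensating integrals so that $A^+\ge 0$, $A^-\le 0$ pointwise). One application of Lemma \ref{dis} then collapses the bound to $\E[e^{A^+}]-\E[e^{A^-}]$, and because $e^{A^\pm}$ is, up to a deterministic factor, the Radon--Nikodym density of $P_T^{(\cdot,0,h^\pm\nu_2)}$ with respect to $P_T^{(0,0,\nu_2)}$, Theorem \ref{teosato} gives $\E[e^{A^\pm}]=\exp\big(\pm T\!\int(h^+-h^-)\,d\nu_2\big)=\exp(\pm TL_1(\nu_1,\nu_2))$ directly, yielding $2\sinh(TL_1(\nu_1,\nu_2))$ in one line. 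By contrast, you decompose \emph{temporally}: Lemma \ref{dis} is used iteratively to get sub-additivity of $t\mapsto\psi_m(t)$, and the bound comes from the derivative $\psi_m'(0^+)=|a_m|+c_m$, computed via a compound-Poisson expansion. The paper's argument is shorter and avoids both the truncation/Fatou step and the small-time analysis; on the other hand, your approach actually delivers the sharper linear bound $\E\big[|1-e^{D_T}|\big]\le 2TL_1(\nu_1,\nu_2)$ (indeed $\le T\big(|(\nu_1-\nu_2)(\R)|+L_1(\nu_1,\nu_2)\big)$ if you keep track of $|a_m|$ separately), which is strictly smaller than $2\sinh(TL_1(\nu_1,\nu_2))$ whenever $TL_1(\nu_1,\nu_2)>0$.
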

\begin{proof}
Because of Theorem \ref{satodec} it is clear that $\E_{P_T^{(0,0,\nu_2)}}\big[| 1-\exp(D_T(x))|\big]= \E_{P_T^{(\gamma^{\nu_2},0,\nu_2)}}\big[| 1-\exp(D_T(x))|\big].$
 In order to simplify the notations let us write 
 $$A^{\pm}(x):=\lim_{\varepsilon\to 0}\bigg(\sum_{r\leq T}\ln h^{\pm}(\Delta x_r)\I_{|\Delta(x_r)|>\varepsilon}-
T\int_{|y|>\varepsilon}(h^{\mp}(y)-1)\nu_2(dy)\bigg)$$ 
with $h^+= \Big(\frac{d\nu_1}{d\nu_2}\Big)^{\I_{\frac{d\nu_1}{d\nu_2}\geq 1}}$ and $h^-=\Big( \frac{d\nu_1}{d\nu_2}\Big)^{\I_{\frac{d\nu_1}{d\nu_2}<1}}$, so that
$$D_T(x)=A^+(x)+A^-(x).$$
Then, using Lemma \ref{dis} and the fact that $A^+(x)\geq 0$ and $A^-(x)\leq 0$ we get:
\begin{align*}
 \E_{P_T^{(\gamma^{\nu_2},0,\nu_2)}}\big[&\vert 1-D_T(x) \vert\big]=\E_{P_T^{(\gamma^{\nu_2},0,\nu_2)}}\big| 1-\exp(A^+(x)+A^-(x))\big| \nonumber\\
                              &\leq \E_{P_T^{(\gamma^{\nu_2},0,\nu_2)}}\bigg[\frac{1+e^{A^+(x)}}{2}\Big| 1-e^{A^-(x)}\Big|+\frac{1+e^{A^-(x)}}{2}\Big| 1-e^{A^+(x)}\Big|\bigg] \label{eq:pass1}\\
                              &=\E_{P_T^{(\gamma^{\nu_2},0,\nu_2)}}\Big[e^{A^+(x)}-e^{A^-(x)}\Big].
\end{align*}
In order to compute the last quantity we apply Theorem \ref{teosato} and the fact that both $A^+(x)$ and $A^-(x)$ have the same law under $P_T^{(\gamma^{\nu_2},0,\nu_2)}$ and 
$P_T^{(0,0,\nu_2)}$:
\begin{align*}
   \E_{P_T^{(\gamma^{\nu_2},0,\nu_2)}}\Big[e^{A^+(x)}-e^{A^-(x)}\Big] &=\exp\bigg(T\int_{\R}(h^+(y)-h^-(y))\nu_2(dy)\bigg)\nonumber\\
&\phantom{=}-\exp\bigg(T\int_{\R}(h^-(y)-h^+(y))\nu_2(dy)\bigg)\\
      &=2\sinh\bigg(T\int_{\R}(h^+(y)-h^-(y)\nu_2(dy)\bigg)\nonumber \\
&=2\sinh\bigg(T\int_{\R}\Big| 1-\frac{d\nu_1}{d\nu_2}(y)\Big| \nu_2(dy)\bigg)\nonumber.
\end{align*}
\end{proof}
\begin{proof}[Proof of Theorem \ref{teo1}]
 
 \emph{Case $\sigma^2>0$:} With the same notations as in Lemma \ref{lemma1} and by means of Lemma \ref{lemmadensita} one can write
\begin{align*}
L_1\big(P_T^{(f_1,\sigma^2,\nu_1)},P_T^{(f_2,\sigma^2,\nu_2)}\big)&=\E_{P_T^{(f_2,\sigma^2,\nu_2)}}\big|1-\exp(C_T(x)+D_T(x))\big|.
\end{align*}
Now, using Lemma \ref{dis} and the independence between 
$C_T(x)$ and $D_T(x)$ (Theorem \ref{satodec}), we obtain
\begin{align*}
  L_1\big(P_T^{(f_2,\sigma^2,\nu_2)},P_T^{(f_1,\sigma^2,\nu_1)}\big)\leq& \E_{P_T^{(f_2,\sigma^2,\nu_2)}}\bigg(\frac{1+e^{C_T(x)}}{2}\bigg)\E_{P_T^{(f_2,\sigma^2,\nu_2)}} \vert 1-e^{D_T(x)} \vert\\
   &+\E_{P_T^{(f_2,\sigma^2,\nu_2)}}\bigg(\frac{1+e^{D_T(x)}}{2}\bigg)\E_{P_T^{(f_2,\sigma^2,\nu_2)}}\vert 1-e^{C_T(x)}\vert.
   \end{align*}
   We conclude the proof using Lemmas \ref{lemmadisc} and \ref{L1facile} together with the fact that $\E_{P_T^{(f_2,\sigma^2,\nu_2)}} e^{C_T(x)}=1=\E_{P_T^{(f_2,\sigma^2,\nu_2)}}e^{D_T(x)}$.
   
\emph{Case $\sigma^2=0$:} If $f_1-f_2\equiv\gamma^{\nu_1}-\gamma^{\nu_2}$, notice that, as the drift component of $\big(\{x_t\},P_T^{(f_1,0,\nu_1)}\big)$ and $\big(\{x_t\},P_T^{(f_2,0,\nu_2)}\big)$ is deterministic, we have $$\frac{dP_T^{(f_1,0,\nu_1)}}{dP_T^{(f_2,0,\nu_2)}}(x)=\frac{dP_T^{(f_1-f_2,0,\nu_1)}}{dP_T^{(0,0,\nu_2)}}(x)=D_T(x)$$ with $D_T(x)$ as in \eqref{ut}. Theorem \ref{teosato} allows us to write the $L_1$-distance between $P_T^{(f_1,0,\nu_1)}$ and $P_T^{(f_2,0,\nu_2)}$ as $\E_{P_T^{(f_2,0,\nu_2)}}\big| 1-D_T(x)\big|$. We then obtain the bound $2\sinh(TL_1(\nu_1,\nu_2))$ by means of Lemma \ref{lemmadisc}.
\end{proof}

\bibliographystyle{plain}
\bibliography{refs}

\begin{thebibliography}{10}

\bibitem{BL}
Lawrence~D. Brown and Mark~G. Low.
\newblock {Asymptotic equivalence of nonparametric regression and white noise}.
\newblock {\em Ann. Statist.}, 24(6):2384--2398, 1996.

\bibitem{cmultinomial}
Andrew~V. Carter.
\newblock Deficiency distance between multinomial and multivariate normal
  experiments.
\newblock {\em Ann. Statist.}, 30(3):708--730, 2002.
\newblock Dedicated to the memory of Lucien Le Cam.

\bibitem{tankov}
Rama Cont and Peter Tankov.
\newblock {\em {Financial modelling with jump processes}}.
\newblock {Chapman \& Hall/CRC Financial Mathematics Series}. Chapman \&
  Hall/CRC, Boca Raton, FL, 2004.

\bibitem{R2006}
Arnak Dalalyan and Markus Rei{\ss}.
\newblock Asymptotic statistical equivalence for scalar ergodic diffusions.
\newblock {\em Probab. Theory Related Fields}, 134(2):248--282, 2006.

\bibitem{g13}
Jan Gairing, Michael H{\"o}gele, Tetiana Kosenkova, and Alexei Kulik.
\newblock Coupling distances between l{\'e}vy measures and applications to
  noise sensitivity of sde.
\newblock 2013.

\bibitem{C14}
Valentine Genon-Catalot and Catherine Laredo.
\newblock Asymptotic equivalence of nonparametric diffusion and euler scheme
  experiments.
\newblock {\em Ann. Statist.}, to appear.

\bibitem{GS}
Alison~L. Gibbs and Francis~Edward Su.
\newblock {On choosing and bounding probability metrics.}
\newblock {\em Int. Stat. Rev.}, 70(3):419--435, 2002.

\bibitem{JS}
Jean Jacod and Albert~N. Shiryaev.
\newblock {\em Limit theorems for stochastic processes}, volume 288 of {\em
  Grundlehren der Mathematischen Wissenschaften [Fundamental Principles of
  Mathematical Sciences]}.
\newblock Springer-Verlag, Berlin, second edition, 2003.

\bibitem{lecam}
Lucien {Le Cam}.
\newblock {\em {Asymptotic methods in statistical decision theory}}.
\newblock {Springer Series in Statistics}. Springer-Verlag, New York, 1986.

\bibitem{LC2000}
Lucien {Le Cam} and Grace~Lo Yang.
\newblock {\em {Asymptotics in statistics}}.
\newblock {Springer Series in Statistics}. Springer-Verlag, New York, second
  edition, 2000.
\newblock Some basic concepts.

\bibitem{MS}
J.~M{\'e}min and A.~N. Shiryayev.
\newblock Distance de {H}ellinger-{K}akutani des lois correspondant \`a deux
  processus \`a accroissements ind\'ependants.
\newblock {\em Z. Wahrsch. Verw. Gebiete}, 70(1):67--89, 1985.

\bibitem{NM}
Grigori Milstein and Michael Nussbaum.
\newblock Diffusion approximation for nonparametric autoregression.
\newblock {\em Probab. Theory Related Fields}, 112(4):535--543, 1998.

\bibitem{newman}
Charles~M. Newman.
\newblock The inner product of path space measures corresponding to random
  processes with independent increments.
\newblock {\em Bull. Amer. Math. Soc.}, 78:268--271, 1972.

\bibitem{P}
Ivan Nourdin and Giovanni Peccati.
\newblock Stein's method on {W}iener chaos.
\newblock {\em Probab. Theory Related Fields}, 145(1-2):75--118, 2009.

\bibitem{N96}
Michael Nussbaum.
\newblock {Asymptotic equivalence of density estimation and {G}aussian white
  noise}.
\newblock {\em Ann. Statist.}, 24(6):2399--2430, 1996.

\bibitem{pp}
Giovanni Peccati.
\newblock The chen-stein method for poisson functionals.
\newblock {\em arXiv preprint arXiv:1112.5051}, 2011.

\bibitem{R}
Nathan Ross.
\newblock Fundamentals of {S}tein's method.
\newblock {\em Probab. Surv.}, 8:210--293, 2011.

\bibitem{sato}
Ken-iti Sato.
\newblock {\em {L{\'e}vy processes and infinitely divisible distributions}},
  volume~68 of {\em {Cambridge Studies in Advanced Mathematics}}.
\newblock Cambridge University Press, Cambridge, 1999.
\newblock Translated from the 1990 Japanese original, Revised by the author.

\bibitem{sato2}
Ken-iti Sato.
\newblock Density transformation in l{\'e}vy processes.
\newblock 2000.

\end{thebibliography}

%
 

\end{document}